 \newtheorem{thm}{Theorem}[section]
 \newtheorem{cor}[thm]{Corollary}
 \newtheorem{lem}[thm]{Lemma}
 \newtheorem{prop}[thm]{Proposition}
 \theoremstyle{definition}
 \newtheorem{defn}[thm]{Definition}
 \theoremstyle{remark}
 \newtheorem{rem}[thm]{Remark}
 \newtheorem{ex}[thm]{Example}
 \numberwithin{equation}{section}
\begin{document}

%
%
%
%
%
%
%
%
%

\title[Fixed point results for multipoint Kannan-type mappings]{Fixed point results for multipoint Kannan-type mappings}


\author[Ravindra K. Bisht]{Ravindra K. Bisht}
\address{Department of Mathematics, National Defence Academy\\
 Khadakwasla-411023\\
Pune\\ India}
\email{ravindra.bisht@yahoo.com}

\author[Evgeniy Petrov]{Evgeniy Petrov}

\address{
Institute of Applied Mathematics and Mechanics\\
of the NAS of Ukraine\\
Batiuka str. 19\\
84116 Slovyansk\\
Ukraine}
\email{eugeniy.petrov@gmail.com}

\subjclass{Primary 47H10; Secondary 47H09}

\keywords{Metric space, fixed point, mappings contracting the total pairwise distances between $n$ points, $n$-point Kannan-type mapping.}


\begin{abstract}
We introduce and study a new type of mappings in metric spaces termed $n$-point Kannan-type mappings. A fixed-point theorem is proved for these mappings. In general case such mappings are discontinuous in the domain but necessarily continuous at fixed points. Conditions under which usual Kannan mappings and mapping contracting the total pairwise distances between $n$ points are $n$-point Kannan-type mappings are found.
It is shown that additional conditions of asymptotic regularity and continuity allow to extend the value of the contraction coefficient in fixed-point theorems for $n$-point Kannan-type mappings.
\end{abstract}

\maketitle

\section{introduction}


In 1968, Kannan \cite{Ka68} established a fixed point theorem for a class of mappings that are neither contractions nor contractive. Subsequently, Subrahmanyam \cite{S75} demonstrated that Kannan’s theorem uniquely characterizes metric completeness, a property that Banach contractions do not share. Specifically, a metric space \(X\) is complete if and only if every Kannan-type mapping on \(X\) has a fixed point.

A mapping \( T \colon X \to X \) on a complete metric space \( (X, d) \) is defined as a Kannan mapping if it satisfies the inequality
\begin{align}
d(Tx, Ty) \leq \lambda \big( d(x, Tx) + d(y, Ty) \big),
\end{align}
for all \( x, y \in X \), where \( 0 \leq \lambda < \frac{1}{2} \). Every Kannan mapping is known to possess a unique fixed point \cite{Ka68}.

Building on Kannan’s foundational work, researchers have advanced fixed point theory by establishing results under diverse contractive conditions (see \cite{Rh77}, \cite{B23}, and references therein).
Typically, in fixed point theory, it is possible to distinguish at least three types of generalizations of the Kannan theorem: in the first case, the contractive nature of the mapping is weakened, see, e.g.~\cite{Is74,Re71,Re712,Bi72,Go19}; in the second case the topology is weakened, see, e.g.~\cite{Ak22,KI19,AA12,DLG12,Ho22,JS09}; the third case  involves theorems for some {K}annan-type multivalued mappings, see, e.g.~\cite{NS10, Um15,DD11}.


In 2023, Petrov \cite{P23} introduced a new class of mappings contracting the perimeters of triangles, formulated for three distinct points~-- over a century after the Banach contraction principle. This advancement extended Banach’s framework to a three-point setting, prompting further generalizations of contraction and contractive mappings, ultimately leading to fixed point theorems applicable to multi-point configurations \cite{PB23, PB23C, PP23C, PP24,BP24, JPS25}. In 2024, Petrov \cite{P24} introduced a new type of mapping in metric spaces, mappings contracting the total pairwise distances between \( n \) points, and proved an existence theorem for periodic points of such mappings. Unlike many other periodic point theorems, this theorem serves as a true generalization of Banach's contraction principle.

In what follows, we denote the cardinality of a set \( X \) by \( |X| \). Let \( (X, d) \) be a metric space with \( |X| \geqslant 2 \), and let \( x_1, x_2, \ldots, x_n \in X \), where \( n \geqslant 2 \). Denote by
\begin{equation}
S(x_1,x_2,\ldots,x_n) = \sum\limits_{1\leqslant i<j\leqslant n} d(x_i,x_j)
\end{equation}
the sum of all pairwise distances between the points in the set \( \{x_1, x_2, \ldots, x_n\} \).

\begin{defn}\label{d1} \cite{P24}
Let \( n \geqslant 2 \), and let \( (X, d) \) be a metric space with \( |X| \geqslant n \). A mapping \( T\colon X\to X \) is called a \emph{mapping contracting the total pairwise distances between \( n \) points} if there exists \( \alpha \in [0,1) \) such that the inequality
\begin{equation}\label{e1}
S(Tx_1,Tx_2,\ldots,Tx_n) \leqslant \alpha S(x_1,x_2,\ldots,x_n)
\end{equation}
holds for all \( n \) pairwise distinct points \( x_1, x_2, \ldots, x_n \in X \).
\end{defn}

\begin{rem}
Note that the requirement for \( x_1, x_2, \ldots, x_n \in X \) to be pairwise distinct is essential \cite{P24}.
\end{rem}


\section{General properties of $n$-point Kannan-type mappings}

Motivated by \cite{P24}, we introduce Kannan-type aggregate pairwise distance mappings between $n$ points as follows:

\begin{defn}\label{def:3.1}
Let $(X,d)$ be a metric space with $|X|\geqslant 2$. A mapping $T\colon X\rightarrow X$ shall be called an $n$-point Kannan-type mapping $(2\leqslant n\leqslant |X|, n\in\mathbb{N})$ in $X$ if there exists $\lambda\in[0, \frac{n-1}{n})$ such that
\begin{equation}\label{eq2.1}
S(Tx_1,Tx_2,\ldots,Tx_n)\leqslant\lambda(d(x_1,Tx_1)+d(x_2,Tx_2)+\cdots+d(x_n,Tx_n))
\end{equation}
for all pairwise distinct points $x_i\in X, i=1,2,\ldots,n, n\geqslant 2$.
\end{defn}

For \( n = 3 \) in Definition \ref{def:3.1}, the mapping \( T \) aligns with the generalized Kannan-type mapping defined by Petrov and Bisht in~\cite{PB23}. 

\begin{prop}\label{pro:3.1}
Let $(X,d)$ be a metric space with $|X| \geq 2$. Every Kannan contraction $T \colon X \to X$ with constant $\lambda \in \left[0, \frac{1}{n}\right)$ ($2\leqslant n \leqslant |X|$) is an $n$-point Kannan-type mapping.
\end{prop}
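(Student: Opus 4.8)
The plan is to apply the Kannan inequality to each pair of image points and then sum over all $\binom{n}{2}$ pairs, keeping careful track of how often each term of the form $d(x_k,Tx_k)$ is produced. First I would fix pairwise distinct points $x_1,x_2,\ldots,x_n\in X$ and expand the left-hand side as
\begin{equation*}
S(Tx_1,Tx_2,\ldots,Tx_n)=\sum_{1\leqslant i<j\leqslant n} d(Tx_i,Tx_j).
\end{equation*}
For each pair $i<j$ the Kannan hypothesis gives $d(Tx_i,Tx_j)\leqslant\lambda\bigl(d(x_i,Tx_i)+d(x_j,Tx_j)\bigr)$, so summing these inequalities yields
\begin{equation*}
S(Tx_1,Tx_2,\ldots,Tx_n)\leqslant\lambda\sum_{1\leqslant i<j\leqslant n}\bigl(d(x_i,Tx_i)+d(x_j,Tx_j)\bigr).
\end{equation*}

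The key step is the combinatorial bookkeeping on the right-hand side. I would argue that for each fixed index $k$ the term $d(x_k,Tx_k)$ occurs once for every pair containing $k$, and there are exactly $n-1$ such pairs; hence the double sum collapses to $(n-1)\sum_{k=1}^{n} d(x_k,Tx_k)$. Substituting this back gives
\begin{equation*}
S(Tx_1,Tx_2,\ldots,Tx_n)\leqslant\lambda(n-1)\sum_{k=1}^{n} d(x_k,Tx_k).
\end{equation*}

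To finish, I would set $\lambda':=\lambda(n-1)$ and verify that it lies in the admissible range. Since $\lambda\in[0,\frac{1}{n})$ by hypothesis, multiplying by $n-1>0$ gives $\lambda'\in[0,\frac{n-1}{n})$, which is precisely the interval required in Definition~\ref{def:3.1}. Therefore $T$ satisfies \eqref{eq2.1} with constant $\lambda'$ for all pairwise distinct $x_1,\ldots,x_n$, so $T$ is an $n$-point Kannan-type mapping.

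I do not expect a serious obstacle here: the only point demanding care is the multiplicity count, namely verifying that each index appears in exactly $n-1$ of the pairs. This is exactly where the factor $n-1$ arises, and it is also what forces the target threshold $\frac{n-1}{n}$ in Definition~\ref{def:3.1} to match up with the hypothesis $\lambda<\frac{1}{n}$; everything else is a direct substitution into the Kannan inequality.
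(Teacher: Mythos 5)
Your proof is correct and follows essentially the same route as the paper's: apply the Kannan inequality pairwise, sum over all $\binom{n}{2}$ pairs, and use the multiplicity count showing each $d(x_k,Tx_k)$ appears $n-1$ times to obtain the constant $\lambda(n-1)$. Your explicit verification that $\lambda(n-1)\in[0,\tfrac{n-1}{n})$ is a small addition the paper leaves implicit, but the argument is the same.
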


\begin{proof} Let $(X,d)$ be a metric space with $|X|\geqslant 2$, and $T\colon X\rightarrow X$ be a Kannan contraction. For any pairwise distinct points $x_1, x_2, \dots, x_n \in X$, the Kannan contraction gives
\[
d(Tx_i, Tx_j) \leqslant \lambda \bigl(d(x_i, Tx_i) + d(x_j, Tx_j)\bigr) \quad \text{for all } 1 \leqslant i < j \leqslant n.
\]
Summing over all pairs $(i,j)$ with $1 \leqslant i < j \leqslant n$, we obtain
\[
S(Tx_1, Tx_2, \dots, Tx_n) = \sum_{1 \leqslant i < j \leqslant n} d(Tx_i, Tx_j) \leqslant \lambda \sum_{1 \leqslant i < j \leqslant n} \bigl(d(x_i, Tx_i) + d(x_j, Tx_j)\bigr).
\]

Observe that for each fixed $i$, the term $d(x_i, Tx_i)$ appears exactly $n-1$ times in the double sum (once for each $j \neq i$). Therefore,
\[
\sum_{1 \leqslant i < j \leqslant n} \bigl(d(x_i, Tx_i) + d(x_j, Tx_j)\bigr) = (n-1) \sum_{i=1}^n d(x_i, Tx_i).
\]

Substituting back, we obtain
\[
S(Tx_1, Tx_2, \dots, Tx_n) \leqslant \lambda (n-1) \sum_{i=1}^n d(x_i, Tx_i).
\]
Hence, $T$ is an $n$-point Kannan-type mapping.
\end{proof}

\begin{lem}\label{lem:3.2} Let $(X,d)$ be a metric space with $|X|\geqslant 2$, and let $T\colon X\rightarrow X$ be an $n$-point Kannan-type mapping $(2\leqslant n\leqslant |X|, n\in\mathbb{N})$. If $z$ is an accumulation point of $X$ and $T$ is continuous, then the inequality
\begin{equation}\label{eq:3.11}
(n-1)d(Tz,Ty)\leqslant\lambda((n-1)d(z,Tz)+d(y,Ty)).
\end{equation}
holds for all $y\in X$.
\end{lem}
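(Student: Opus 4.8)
The plan is to approximate the degenerate configuration consisting of $n-1$ copies of $z$ together with $y$ by genuinely distinct points, apply the defining inequality \eqref{eq2.1} to these points, and then pass to the limit using the continuity of $T$. First I would dispose of the case $y=z$: here the left-hand side of \eqref{eq:3.11} equals $(n-1)d(Tz,Tz)=0$, so the inequality holds trivially. Thus assume $y\neq z$ and put $r=d(y,z)>0$. Since $z$ is an accumulation point of the metric space $X$, every ball centred at $z$ contains infinitely many points of $X$; in particular, for each $k\in\mathbb{N}$ with $1/k<r/2$ one can choose $n-1$ pairwise distinct points $z_1^{(k)},\dots,z_{n-1}^{(k)}\in B(z,1/k)$. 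The condition $1/k<r/2$ forces $d(z_i^{(k)},y)\geqslant r-1/k>r/2>0$, so the $n$ points $z_1^{(k)},\dots,z_{n-1}^{(k)},y$ are pairwise distinct and \eqref{eq2.1} applies to them.

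Splitting the left-hand sum $S$ into the pairwise distances among the images $Tz_i^{(k)}$ and the $n-1$ distances $d(Tz_i^{(k)},Ty)$, the inequality \eqref{eq2.1} reads
\begin{equation*}
\sum_{1\leqslant i<j\leqslant n-1} d(Tz_i^{(k)},Tz_j^{(k)}) + \sum_{i=1}^{n-1} d(Tz_i^{(k)},Ty) \leqslant \lambda\Bigl(\sum_{i=1}^{n-1} d(z_i^{(k)},Tz_i^{(k)}) + d(y,Ty)\Bigr).
\end{equation*}
Letting $k\to\infty$ gives $z_i^{(k)}\to z$, hence $Tz_i^{(k)}\to Tz$ by continuity of $T$, and continuity of the metric then yields $d(Tz_i^{(k)},Tz_j^{(k)})\to 0$, $d(Tz_i^{(k)},Ty)\to d(Tz,Ty)$, and $d(z_i^{(k)},Tz_i^{(k)})\to d(z,Tz)$. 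In the limit the first sum collapses to $0$, the second to $(n-1)d(Tz,Ty)$, and the right-hand side to $\lambda((n-1)d(z,Tz)+d(y,Ty))$, which is precisely \eqref{eq:3.11}.

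The one point requiring care is the selection step: I rely on the fact that an accumulation point of a metric space has infinitely many points in every neighbourhood, which is exactly what guarantees the existence of $n-1$ distinct approximants lying close to $z$ and away from $y$. Once these points are in hand, the remainder is a routine limiting argument built on the continuity of $T$ and of $d$, so I expect no further difficulty.
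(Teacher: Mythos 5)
Your proof is correct and follows essentially the same route as the paper's: approximate the degenerate configuration by $n$ genuinely distinct points clustering at $z$ (together with $y$), apply \eqref{eq2.1}, and pass to the limit using the continuity of $T$ and of the metric. The only cosmetic difference is that the paper keeps $z$ itself among the $n$ points and uses $n-2$ approximants from a sequence $x_k\to z$ (chosen distinct from $z$ and $y$), whereas you use $n-1$ approximants and omit $z$; both choices collapse to the same limit inequality \eqref{eq:3.11}.
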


\begin{proof}
Given any accumulation point $z\in X$, and any $y\in X$. If $z=y$, then \eqref{eq:3.11} holds trivially.\\
Assume that $z\neq y$, since $z$ is an accumulation point of $X$, there exists a sequence $x_k\rightarrow z$ such that $x_k\neq z, x_k\neq y$ for all $k$ and all $x_k$ are pairwise distinct. Hence, by \eqref{eq2.1}, we have
\begin{align}\label{eq:3.3}
&S(Tz,Ty,Tx_1,Tx_{2},\ldots,Tx_{n-2})\notag\\
&\leqslant\lambda(d(z,Tz)+d(y,Ty)+d(x_{k+1},Tx_{k+1})+\cdots+d(x_{k+n-2},Tx_{k+n-2})),
\end{align}
for all $k\in\mathbb{N}$.

Since $d(x_k,z)\rightarrow 0$,
due to the continuity of $T$, we have $Tx_k\to Tz$, $d(Ty,Tx_k)\rightarrow d(Ty,Tz)$ and $d(x_k,Tx_k)\rightarrow d(z,Tz)$. Letting $n\rightarrow\infty$ in \eqref{eq:3.3}, we have \eqref{eq:3.11}.
\end{proof}

\begin{prop}\label{pro:3.3} Let $(X,d)$ be a metric space with $|X|\geqslant 2$, and $T\colon  X\rightarrow X$ be a continuous $n$-point Kannan-type mapping $(2\leqslant n\leqslant |X|, n\in\mathbb{N})$. Suppose that all points of $X$ are accumulation points. Then $T$ is a Kannan contraction.
\end{prop}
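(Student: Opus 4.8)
The plan is to establish the Kannan contraction inequality directly, exploiting the hypothesis that every point of $X$ is an accumulation point, which is precisely what is needed to apply Lemma~\ref{lem:3.2} with an arbitrary base point. The crucial observation is that Lemma~\ref{lem:3.2} is asymmetric in its two arguments, so I would symmetrize it.

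First I would fix two arbitrary points $x,y\in X$. The case $x=y$ is trivial, since then $d(Tx,Ty)=0$ and any contraction estimate holds. For $x\neq y$, since both $x$ and $y$ are accumulation points of $X$ and $T$ is continuous, Lemma~\ref{lem:3.2} applies in two ways. Taking $z=x$ in \eqref{eq:3.11} gives
\[
(n-1)d(Tx,Ty)\leqslant\lambda\bigl((n-1)d(x,Tx)+d(y,Ty)\bigr),
\]
while taking $z=y$ and using the symmetry $d(Ty,Tx)=d(Tx,Ty)$ of the metric gives
\[
(n-1)d(Tx,Ty)\leqslant\lambda\bigl((n-1)d(y,Ty)+d(x,Tx)\bigr).
\]

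Next I would add these two inequalities. On the right-hand side the coefficient of $d(x,Tx)$ becomes $(n-1)+1=n$, and likewise for $d(y,Ty)$, so the sum simplifies to
\[
2(n-1)d(Tx,Ty)\leqslant\lambda n\bigl(d(x,Tx)+d(y,Ty)\bigr),
\]
and therefore
\[
d(Tx,Ty)\leqslant\frac{\lambda n}{2(n-1)}\bigl(d(x,Tx)+d(y,Ty)\bigr).
\]
Setting $\mu=\dfrac{\lambda n}{2(n-1)}$ and using the defining bound $\lambda<\dfrac{n-1}{n}$, one checks that $\mu<\dfrac{1}{2}$, so $T$ is a Kannan contraction with constant $\mu\in\bigl[0,\tfrac12\bigr)$.

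The step I expect to be the genuine obstacle is controlling the contraction constant. A single application of Lemma~\ref{lem:3.2}, say with $z=x$, only yields $d(Tx,Ty)\leqslant\lambda\bigl(d(x,Tx)+\tfrac{1}{n-1}d(y,Ty)\bigr)\leqslant\lambda\bigl(d(x,Tx)+d(y,Ty)\bigr)$, and since $\lambda$ may reach almost $\tfrac{n-1}{n}\geqslant\tfrac{2}{3}$ for $n\geqslant 3$, this constant exceeds $\tfrac12$ and does \emph{not} qualify $T$ as a Kannan contraction. The resolution is exactly the symmetrization above: applying the lemma at both base points and summing balances the asymmetric weights $(n-1)$ and $1$, collapsing them to the common weight $n$ and pulling the effective constant below $\tfrac12$.
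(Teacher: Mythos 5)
Your proposal is correct and follows essentially the same route as the paper: both apply Lemma~\ref{lem:3.2} with the roles of the two points swapped and add the resulting inequalities to obtain $d(Tx,Ty)\leqslant\frac{n\lambda}{2(n-1)}\bigl(d(x,Tx)+d(y,Ty)\bigr)$ with constant below $\tfrac12$. Your closing remark about why a single, unsymmetrized application of the lemma would not suffice is a nice additional observation, but the argument itself is the paper's.
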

\begin{proof}
By Lemma~\ref{lem:3.2}, in addition to~\eqref{eq:3.11} we also get the inequality
\begin{eqnarray}\label{eq:1.15}
(n-1)d(Ty,Tz)\leqslant\lambda((n-1)d(y,Ty)+d(z,Tz))
\end{eqnarray}
for all $y,z\in X$. Adding the left and right sides of inequalities~\eqref{eq:3.11} and ~\eqref{eq:1.15}, we get
\begin{align*}
d(Tz,Ty)\leqslant\frac{n\lambda}{2(n-1)}(d(z,Tz)+d(y,Ty)).
\end{align*}
Since $\lambda\in[0,\frac{n-1}{n})$, we have $\frac{n\lambda}{2(n-1)}\in [0, \frac{1}{2})$, which completes the proof.
\end{proof}

\begin{prop}\label{pro:3.4}
Let $(X,d)$ be a metric space with $|X|\geqslant 2$, and $T\colon X\rightarrow X$ be a mapping contracting the total pairwise distances between $n$ points $(2\leqslant n\leqslant |X|, n\in\mathbb{N})$ with $0\leqslant\alpha<\frac{1}{n+1}$.
Then $T$ is an $n$-point Kannan-type mapping.
\end{prop}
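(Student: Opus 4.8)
The plan is to establish the Kannan-type inequality by combining the triangle inequality with the contraction hypothesis, thereby extracting an explicit contraction coefficient $\lambda$ and then verifying that the threshold $\alpha<\frac{1}{n+1}$ is exactly what forces $\lambda<\frac{n-1}{n}$. First I would fix arbitrary pairwise distinct points $x_1,\ldots,x_n\in X$ and, for each pair $1\leqslant i<j\leqslant n$, apply the triangle inequality in the form
\[
d(x_i,x_j)\leqslant d(x_i,Tx_i)+d(Tx_i,Tx_j)+d(Tx_j,x_j).
\]
Summing over all such pairs yields $S(x_1,\ldots,x_n)$ on the left, while the middle terms reassemble into $S(Tx_1,\ldots,Tx_n)$ on the right.

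The key combinatorial step is the same multiplicity count already used in the proof of Proposition~\ref{pro:3.1}: for each fixed index $k$, the summand $d(x_k,Tx_k)$ occurs exactly $n-1$ times across the double sum (once for each partner index), so the endpoint contributions collapse to $(n-1)\sum_{k=1}^{n}d(x_k,Tx_k)$. This gives the bound
\[
S(x_1,\ldots,x_n)\leqslant (n-1)\sum_{k=1}^{n}d(x_k,Tx_k)+S(Tx_1,\ldots,Tx_n).
\]
I would then substitute this into the contraction condition~\eqref{e1}, namely $S(Tx_1,\ldots,Tx_n)\leqslant\alpha S(x_1,\ldots,x_n)$, to obtain an inequality in which $S(Tx_1,\ldots,Tx_n)$ appears on both sides.

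Collecting the terms involving $S(Tx_1,\ldots,Tx_n)$ and using $1-\alpha>0$ (which holds since $\alpha<\frac{1}{n+1}<1$), I would divide through to arrive at
\[
S(Tx_1,\ldots,Tx_n)\leqslant\frac{\alpha(n-1)}{1-\alpha}\sum_{k=1}^{n}d(x_k,Tx_k),
\]
so the natural candidate is $\lambda=\frac{\alpha(n-1)}{1-\alpha}$. The final step is the verification that this $\lambda$ lies in the admissible range: the inequality $\frac{\alpha(n-1)}{1-\alpha}<\frac{n-1}{n}$ simplifies (after cancelling the factor $n-1$ and clearing denominators) to $n\alpha<1-\alpha$, i.e.\ $\alpha(n+1)<1$, which is precisely the hypothesis $\alpha<\frac{1}{n+1}$.

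I do not expect any genuine obstacle here; the argument is a clean chain of triangle inequality, substitution, and algebra. The only point requiring care is the bookkeeping in the multiplicity count and the direction of the inequality when isolating $S(Tx_1,\ldots,Tx_n)$ — one must confirm that the coefficient $1-\alpha$ is strictly positive before dividing, which is guaranteed by the stated bound on $\alpha$. It is also worth remarking that the threshold $\frac{1}{n+1}$ emerging from this computation is exactly tight relative to the definition of $n$-point Kannan-type mappings, since the constraint on $\lambda$ unwinds back to the constraint on $\alpha$ with no slack.
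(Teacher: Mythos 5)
Your proposal is correct and follows essentially the same route as the paper: the same pairwise triangle inequality, the same multiplicity count yielding $(n-1)\sum_{k=1}^{n}d(x_k,Tx_k)$, the same substitution into the contraction condition, and the same coefficient $\lambda=\frac{\alpha(n-1)}{1-\alpha}$ with the identical verification that $\alpha<\frac{1}{n+1}$ gives $\lambda<\frac{n-1}{n}$. No differences worth noting.
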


\begin{proof}
By the supposition~\eqref{eq2.1} holds for all pairwise distinct points $x_i\in X, i=1,2,\ldots,n$. 
Using the triangle inequalities for each pair
$$d(x_i,x_j)\leqslant d(x_i,Tx_i)+d(Tx_i,Tx_j)+d(Tx_j,x_j),$$ we get
\[
S(x_1,\ldots,x_n) \leqslant \sum_{1\leqslant i<j\leqslant n} \Big(d(x_i,Tx_i) + d(Tx_i,Tx_j) + d(Tx_j,x_j)\Big).
\]
Thus
\begin{equation}\label{66}
    S(x_1,\ldots,x_n) \leqslant (n-1)\sum_{i=1}^n d(x_i,Tx_i) + S(Tx_1,\ldots,Tx_n).
    \end{equation}
Substituting \eqref{66} into \eqref{eq2.1}, we obtain
    \[
    S(Tx_1,\ldots,Tx_n) \leqslant \alpha\left((n-1)\sum_{i=1}^n d(x_i,Tx_i) + S(Tx_1,\ldots,Tx_n)\right).
    \]
This implies
 \begin{align*}
    S(Tx_1,\ldots,Tx_n) &\leqslant \frac{\alpha(n-1)}{1-\alpha}\sum_{i=1}^n d(x_i,Tx_i).
    \end{align*}
Let $\lambda = \frac{\alpha(n-1)}{1-\alpha}$. Since $0\leqslant \alpha <\frac{1}{n+1}$, we get $0\leqslant \lambda <\frac{n-1}{n}$.
Hence, $T$ is an $n$-point Kannan-type mapping.
\end{proof}

\section{The main results}

The following lemma describes a sufficiently general condition guaranteeing that a sequence in a metric space is a Cauchy sequence.

\begin{lem}\label{l26}
Let $(X,d)$ be a metric space, let $n\geqslant 2$ $n\in \mathbb N$ and let $(x_m)$, $m=0,1,2,...$ be a sequence in $(X,d)$. Set $p_m=d(x_m, x_{m+1})$. Suppose that for every $m\geqslant n-1$ the inequality
\begin{equation}\label{e2}
p_m\leqslant \rho\max\limits_{m-n+1 \leqslant i\leqslant m-1}p_i
\end{equation}
holds for some $0\leqslant \rho <1$, i.e. every distance $p_m$ is not greater then the maximal distance of $n-1$ previous distances multiplied by some real number \mbox{$\rho\in [0,1)$}. Then $(x_m)$ is a Cauchy sequence.
\end{lem}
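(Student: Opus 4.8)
The plan is to show that the consecutive distances $p_m$ decay geometrically, which forces the series $\sum_m p_m$ to converge and hence makes $(x_m)$ Cauchy. First I would record the elementary observation that $(p_m)$ is bounded: setting $P=\max\{p_0,\dots,p_{n-2}\}$, an easy induction using \eqref{e2} shows $p_m\leqslant P$ for every $m$, since each $p_m$ with $m\geqslant n-1$ is at most $\rho$ times a maximum of earlier terms that are already $\leqslant P$.

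The core of the argument is a block decomposition. I would partition the indices into consecutive blocks of length $n-1$, block $k$ consisting of the indices $k(n-1),k(n-1)+1,\dots,(k+1)(n-1)-1$, and set $M_k=\max\{p_i: i \text{ in block } k\}$, so that $M_0=P$. The goal is to prove $M_{k+1}\leqslant\rho M_k$ for every $k\geqslant 0$, from which $M_k\leqslant\rho^k P$ follows immediately by iteration. Note that every index lying in a block $k\geqslant 1$ is at least $n-1$, so the recurrence \eqref{e2} is available for all of them.

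To establish $M_{k+1}\leqslant \rho M_k$ I would argue by induction on the index $j$ as it runs through block $k+1$ in increasing order. For the first index $j=(k+1)(n-1)$ the window $\{j-n+1,\dots,j-1\}$ in \eqref{e2} is exactly block $k$, so $p_j\leqslant\rho M_k$. For a later index $j$ of block $k+1$, its window $\{j-n+1,\dots,j-1\}$ consists of some trailing indices of block $k$ (each $\leqslant M_k$) together with indices of block $k+1$ strictly below $j$; by the inductive hypothesis the latter are already $\leqslant\rho M_k\leqslant M_k$, so the maximum over the whole window is $\leqslant M_k$ and hence $p_j\leqslant\rho M_k$. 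This is the one delicate point of the proof: because the window $\{j-n+1,\dots,j-1\}$ for the larger indices of block $k+1$ reaches into block $k+1$ itself, the bound cannot be read off from $M_k$ in a single step, and an inner induction within the block is required.

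Finally I would sum. Each block contributes at most $(n-1)M_k\leqslant (n-1)\rho^k P$ to $\sum_m p_m$, so $\sum_{m=0}^\infty p_m\leqslant (n-1)P\sum_{k=0}^\infty\rho^k=\frac{(n-1)P}{1-\rho}<\infty$. Convergence of this series means its tails tend to $0$, and since for any $m<m'$ the triangle inequality gives $d(x_m,x_{m'})\leqslant\sum_{i=m}^{m'-1}p_i\leqslant\sum_{i=m}^{\infty}p_i$, the right-hand side can be made arbitrarily small by taking $m$ large. Hence $(x_m)$ is a Cauchy sequence.
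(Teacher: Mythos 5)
Your proof is correct and follows essentially the same route as the paper: both introduce $P=\max\{p_0,\dots,p_{n-2}\}$, partition the indices into blocks of length $n-1$ whose maxima decay by a factor of $\rho$ per block, and conclude via a geometric series. The only difference is presentational — you make explicit the inner induction within each block (which the paper leaves implicit behind its displayed pattern of inequalities) and you sum the full series $\sum_m p_m$, whereas the paper interpolates the bound $p_m\leqslant\rho^{(m-n+2)/(n-1)}P$ and bounds $d(x_m,x_{m+k})$ directly.
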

\begin{proof}
Let $P = \max\{p_0, \dots, p_{n-2}\}$. Hence, by~(\ref{e2}) we obtain
$$
p_0\leqslant P, \quad p_1\leqslant P,\ldots,\quad p_{n-2}\leqslant P,
$$
$$
p_{n-1}\leqslant \rho P, \quad p_{n}\leqslant \rho P,\ldots,\quad p_{2n-3}\leqslant \rho P,
$$
$$
p_{2n-2}\leqslant \rho^2 P, \quad p_{2n-1}\leqslant \rho^2 P,\ldots,\quad p_{3n-4}\leqslant \rho^2 P,
$$
$$
p_{3n-3}\leqslant \rho^3 P, \quad p_{3n-2}\leqslant \rho^3 P,\ldots,\quad p_{4n-5}\leqslant \rho^3 P,
$$
$$
\cdots .
$$
Since $\rho<1$, it is clear that the inequalities
$$
p_{n-1}\leqslant \rho^{\frac{1}{n-1}} P, \quad p_{n}\leqslant \rho^{\frac{2}{n-1}} P,\ldots,\quad p_{2n-3}\leqslant \rho P,
$$
$$
p_{2n-2}\leqslant \rho^{1+\frac{1}{n-1}} P, \quad p_{2n-1}\leqslant \rho^{1+\frac{2}{n-1}} P,\ldots,\quad p_{3n-4}\leqslant \rho^2 P,
$$
$$
p_{3n-3}\leqslant \rho^{2+\frac{1}{n-1}} P, \quad p_{3n-2}\leqslant \rho^{2+\frac{2}{n-1}} P,\ldots,\quad p_{4n-5}\leqslant \rho^3 P,
$$
$$
\cdots .
$$
also hold, i.e.,
\begin{equation}\label{e3}
  p_m\leqslant \rho^{\frac{m-n+2}{n-1}}P
\end{equation}
for $m=n-1, n, n+1,...$.

Let $k\in \mathbb N$, $k\geqslant 2$. By the triangle inequality, for $m\geqslant n-1$ we have
$$
d(x_m,x_{m+k})\leqslant d(x_m,x_{m+1})+d(x_{m+1},x_{m+2})+\cdots+d(x_{m+k-1},x_{m+k})
$$
$$
=p_m+p_{m+1}+\cdots+p_{m+k-1}\leqslant
P(\rho^{\frac{m-n+2}{n-1}}+\rho^{\frac{m-n+3}{n-1}}+\cdots
+\rho^{\frac{m+k-n+1}{n-1}})
$$
$$
=P\rho^{\frac{m-n+2}{n-1}}(1+\rho^{\frac{1}{n-1}}+\cdots
+\rho^{\frac{k-1}{n-1}})=P\rho^{\frac{m-n+2}{n-1}}
\frac{1-\rho^{\frac{k}{n-1}}}{1-\rho^\frac{1}{n-1}}.
$$
Since by supposition $0 \leqslant \rho <1$, it follows that
$0 \leqslant \rho^{\frac{k}{n-1}} <1$ and
$$
d(x_m,x_{m+k})\leqslant P\rho^{\frac{m-n+2}{n-1}}
\frac{1}{1-\rho^\frac{1}{n-1}}.
$$
Hence, $d(x_m,x_{m+k})\to 0$ as $m\to \infty$ for every $k\geqslant 2$. For $k=1$ this follows from~(\ref{e3}). Thus, $(x_m)$ is a Cauchy sequence.
\end{proof}

Let $T$ be a mapping on the metric space $X$. Recall, that a point $x\in X$ is called a \emph{periodic point of period $n$} if $T^n(x) = x$. The least positive integer $n$ for which $T^n(x) = x$ is called the prime period of $x$, see, e.g.,~\cite[p.~18]{De22}.

The central result of this paper is presented in the following theorem.

\begin{thm}\label{thm1}
Let $(X,d)$ be a complete metric space with $|X| \geq n \geq 2$. Let $T\colon  X \rightarrow X$ be an $n$-point Kannan-type mapping. If $T$ does not have periodic points of prime periods $2, 3, \dots, n-1$, then $T$ has a fixed point in $X$. Moreover, $T$ can admit at most $n-1$ fixed points.
\end{thm}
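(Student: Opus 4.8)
The plan is to run the Picard iteration $x_{m+1}=Tx_m$ from an arbitrary $x_0$ and show it converges to a fixed point. First I would dispose of the trivial case: if $x_m=x_{m+1}$ for some $m$, then $Tx_m=x_m$ and we are done, so assume $x_m\neq x_{m+1}$ for all $m$ and set $p_m=d(x_m,x_{m+1})$. The crucial preliminary observation is that the hypothesis on prime periods guarantees that any block of $n$ consecutive iterates $x_{m-n+1},\dots,x_m$ is pairwise distinct: a coincidence $x_i=x_j$ with $0<j-i\leq n-1$ would make $x_i$ a periodic point whose prime period divides $j-i\leq n-1$, hence equals $1$ (a fixed point, and we are done) or lies in $\{2,\dots,n-1\}$ (excluded). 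This is exactly what is needed so that inequality~\eqref{eq2.1} may be legitimately applied to such a block.

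Next I would feed each consecutive block into~\eqref{eq2.1}. Taking the $n$ distinct points $x_{m-n+1},\dots,x_m$ and using $Tx_i=x_{i+1}$ gives $S(x_{m-n+2},\dots,x_{m+1})\leq \lambda\sum_{i=m-n+1}^{m}p_i$. Since the left-hand side is the sum of \emph{all} pairwise distances among $x_{m-n+2},\dots,x_{m+1}$, it dominates the sum of the $n-1$ consecutive (``path'') distances, which yields the key telescoping inequality
\[
\sum_{i=m-n+2}^{m}p_i\leq \lambda\sum_{i=m-n+1}^{m}p_i,\qquad m\geq n-1 .
\]

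The main obstacle is to convert this into convergence for the \emph{full} range $\lambda\in[0,\tfrac{n-1}{n})$. Isolating one step gives $p_m\leq\frac{\lambda}{1-\lambda}\,p_{m-n+1}$, and since $p_{m-n+1}$ lies in the window appearing in~\eqref{e2}, one could invoke Lemma~\ref{l26} with $\rho=\frac{\lambda}{1-\lambda}$; but this forces $\rho<1$ only when $\lambda<\tfrac12$, which is strictly weaker than the admissible range for $n\geq 3$. To exploit the whole range I would instead sum the telescoping inequality over $m$ from $n-1$ to $M$: each $p_i$ with $i$ in the bulk appears with coefficient $n-1$ on the left and $n\lambda$ on the right, so the sum collapses to $(n-1-n\lambda)\sum_{\text{bulk}}p_i\leq(\text{finitely many boundary terms})$. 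Precisely because $\lambda<\tfrac{n-1}{n}$ we have $n-1-n\lambda>0$, whence $\sum_m p_m<\infty$; the threshold $\tfrac{n-1}{n}$ in Definition~\ref{def:3.1} is calibrated exactly for this. Summability makes $(x_m)$ Cauchy, so by completeness $x_m\to x^\ast$ for some $x^\ast\in X$.

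Finally I would show $x^\ast$ is fixed \emph{without} assuming continuity (which $T$ may lack). For large $m$ the points $x^\ast,x_m,\dots,x_{m+n-2}$ are pairwise distinct: the $n-1$ iterates are distinct as above, and if $x^\ast$ coincided with an iterate infinitely often it would be periodic, but a convergent periodic orbit is eventually constant, returning us to the fixed-point case. Applying~\eqref{eq2.1} to these $n$ points and keeping only the term $d(Tx^\ast,x_{m+1})$ on the left gives
\[
d(Tx^\ast,x_{m+1})\leq \lambda\Big(d(x^\ast,Tx^\ast)+\sum_{i=m}^{m+n-2}p_i\Big).
\]
Letting $m\to\infty$, the sum vanishes and $x_{m+1}\to x^\ast$, so $d(Tx^\ast,x^\ast)\leq\lambda\,d(x^\ast,Tx^\ast)$ with $\lambda<1$, forcing $Tx^\ast=x^\ast$. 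For the last assertion, if $T$ had $n$ distinct fixed points, substituting them into~\eqref{eq2.1} makes the right-hand side vanish while the left-hand side equals $S$ of $n$ distinct points, which is positive~-- a contradiction. Hence $T$ has at most $n-1$ fixed points.
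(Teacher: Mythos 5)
Your proof is correct, and the central step~--- showing that the Picard sequence is Cauchy~--- takes a genuinely different route from the paper's. The paper extracts more from the left-hand side of~\eqref{eq2.1}: using the triangle inequalities $d(x_{m+n-1},x_k)+d(x_k,x_{m+n})\geq d(x_{m+n-1},x_{m+n})$ it shows that $S(x_{m+1},\dots,x_{m+n})$ dominates $(n-1)\,d(x_{m+n-1},x_{m+n})$, which after absorbing one term from the right yields $p_{m+n-1}\leq\frac{(n-1)\lambda}{\,n-1-\lambda\,}\max_{0\leq i\leq n-2}p_{m+i}$ with ratio strictly less than $1$ precisely when $\lambda<\frac{n-1}{n}$, and then invokes Lemma~\ref{l26} to get geometric decay of the gaps. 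You instead use only the trivial bound $S\geq\sum(\text{consecutive gaps})$, arrive at the telescoping inequality $\sum_{i=m-n+2}^{m}p_i\leq\lambda\sum_{i=m-n+1}^{m}p_i$, and recover the full range of $\lambda$ by summing over $m$ and double-counting: each bulk $p_i$ receives weight $n-1$ on the left and $n\lambda$ on the right, so $\lambda<\frac{n-1}{n}$ gives $\sum_m p_m<\infty$ and hence Cauchyness. This is valid, but in a polished writeup you should say a word about the upper boundary: for $i\in\{M-n+2,\dots,M\}$ the weights on the two sides are both $M-i+1$, so the net coefficient $(1-\lambda)(M-i+1)$ is nonnegative and these terms may be discarded, leaving only the $M$-independent lower boundary contribution $\sum_{i=0}^{n-2}\bigl(\lambda(i+1)-i\bigr)p_i$. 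Your route bypasses both Lemma~\ref{l26} and the triangle-inequality manipulation, at the cost of producing only summability rather than an explicit geometric rate~--- which is all the theorem requires; you also correctly diagnose why the naive application of Lemma~\ref{l26} with $\rho=\lambda/(1-\lambda)$ would lose the range $\lambda\in[\tfrac12,\tfrac{n-1}{n})$. The remaining steps (pairwise distinctness of consecutive blocks from the periodic-point hypothesis, the limit being a fixed point without any continuity assumption, and the bound of $n-1$ on the number of fixed points) coincide with the paper's argument.
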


\begin{proof}
Let $T\colon  X \rightarrow X$ be an $n$-point Kannan-type mapping that does not have periodic points of prime periods $2, 3, \dots, n-1$.

For any $x_0 \in X$, define the sequence $\{x_m\}$ by $x_m = Tx_{m-1}$ for $m \in \mathbb{N}$. If $x_m$ is a fixed point for some $m$, we are done. Otherwise, assume $x_m \neq Tx_m$ for all $m$, which implies $x_m \neq x_{m+1}$ for all $m$. Since $T$ has no periodic points of prime periods $2, \dots, n-1$, every $n$ consecutive terms of $(x_m)$ are distinct.

For any $m \geq 0$, (\ref{eq2.1}) implies
\begin{align*}
S(x_{m+1}, \dots, x_{m+n}) &\leqslant \lambda \sum_{i=1}^{n} d(x_{m+i}, x_{m+i+1}).
\end{align*}
Expanding $S$ and isolating $d(x_{m+n-1}, x_{m+n})$, we get
\begin{align*}
(1-\lambda)d(x_{m+n-1}, x_{m+n}) \leqslant &\lambda \sum_{i=1}^{n-2} d(x_{m+i}, x_{m+i+1})+d(x_{m+n}, x_{m+n+1})
 \\
 -&\sum_{\substack{1 \leqslant i < j \leqslant n \\ (i,j) \neq (n-1,n)}} d(x_{m+i}, x_{m+j}).
\end{align*}
Using the evident inequalities
$$
-d(x_{m+n-1},x_k)-d(x_k,x_{m+n})\leqslant -d(x_{m+n-1},x_{m+n})
$$
for $k=m+1,\ldots, m+n-2$ and omitting unnecessary negative terms in the sum above, we get
\begin{align*}
(n-1-\lambda)d(x_{m+n-1}, x_{m+n}) &\leqslant \lambda \sum_{i=0}^{n-2} d(x_{m+i}, x_{m+i+1}).
\end{align*}
Rearranging, we get
\begin{align*}
d(x_{m+n-1}, x_{m+n}) &\leqslant \frac{\lambda}{(n-1-\lambda)} \sum_{i=0}^{n-2} d(x_{m+i}, x_{m+i+1}),
\end{align*}
and
\begin{align*}
    & d(x_{m+n-1}, x_{m+n}) \leqslant \frac{(n-1)\lambda}{(n-1-\lambda)} \max_{0 \leqslant i \leqslant n-2} d(x_{m+i}, x_{m+i+1}).
\end{align*}

Let $\rho = \frac{(n-1)\lambda}{n-1-\lambda}$. Since $\lambda < \frac{n-1}{n}$, we have $\rho < 1$ and
\begin{equation*}
d(x_{m+n-1}, x_{m+n}) \leqslant \rho \max_{0 \leqslant i \leqslant n-2} d(x_{m+i}, x_{m+i+1}).
\end{equation*}
By Lemma~\ref{l26},
the sequence \( (x_m) \) is Cauchy and, by the completeness of \( X \), it converges to some \( w \in X \).

Recall that any $n$ consecutive elements of the sequence $(x_m)$ are pairwise distinct. If
$w\neq x_m$ for all $m\in \{1, 2, ...\}$, then inequality ~(\ref{eq2.1}) holds for $n$ pairwise distinct points $x_{m-1},x_m,...,x_{m+n-3},w$. Suppose that there exists the smallest possible $k\in \{1, 2, ...\}$ such that $w = x_k$. Let $t > k$ be such that $w=x_t$. Then the sequence $(x_m$) is cyclic starting from $k$ and cannot be a Cauchy sequence. Hence, the points $x_{m-1},x_m,...,x_{m+n-3},w$ are pairwise distinct at least when $m-1>k$.
Further, we have
\begin{align} \label{33}
d(w, Tw) &\leqslant d(w, x_m) + d(Tx_{m-1}, Tw) \\ \notag
&\leqslant d(w, x_m) + S(Tx_{m-1}, Tx_m,...,Tx_{m+n-3}, Tw)\\ \notag
&\leqslant d(w, x_m) + \lambda\left( d(w, Tw)+\sum_{j=m-1}^{m+n-3} d(x_j, x_{j+1})\right).
\end{align}
Rearranging this inequality, we get
\begin{align}\label{34}
d(w, Tw) \leqslant \frac{1}{1-\lambda}
\left( d(w, x_m) + \lambda\sum_{j=m-1}^{m+n-3} d(x_j, x_{j+1}) \right)\to 0,
\end{align}
as $m\to \infty$. Hence, $Tw = w$.

If \( T \) has \( n \) distinct fixed points \( w_1, \dots, w_n \), then by applying ~\eqref{eq2.1}, we obtain
\begin{equation*}
S(w_1, \dots, w_n) = S(Tw_1, \dots, Tw_n) \leqslant \lambda \sum_{i=1}^{n} d(w_i, Tw_i) = 0.
\end{equation*}
Since \( S(w_1, \dots, w_n) = 0 \) contradicts the distinctness of the fixed points, it follows that \( T \) can have at most \( n-1 \) fixed points.

\end{proof}

\begin{prop}\label{pro:3.5} Suppose that under the assumption of Theorem \ref{thm1}, the mapping $T$ has a fixed point $w$ that acts as the limit for a specific iteration sequence $\{x_i\}_0^\infty$ defined by $x_i=Tx_{i-1}, i\in\mathbb{N}$ with $w\neq x_i$ for all $i\in\mathbb{N}\cup\{0\}$, then $w$ is the unique fixed point of $T$.
\end{prop}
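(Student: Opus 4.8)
The plan is to argue by contradiction: assume $T$ admits a second fixed point $v\neq w$ and exploit the given iteration sequence $\{x_i\}$ converging to $w$ (with $w\neq x_i$ for all $i$) to force $d(v,w)=0$. The mechanism is exactly the one used in the final paragraph of the proof of Theorem~\ref{thm1}, but now with the prospective fixed point $v$ playing the role that $w$ played there, while $w$ remains the limit of the sequence.

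First I would collect the properties inherited from the hypotheses of Theorem~\ref{thm1}. Since $T$ has no periodic points of prime periods $2,3,\dots,n-1$ and $x_i\neq x_{i+1}$, any $n$ consecutive terms of $(x_i)$ are pairwise distinct; moreover, as shown in the proof of Theorem~\ref{thm1}, the sequence $(x_i)$ is Cauchy, so $d(x_j,x_{j+1})\to 0$ as $j\to\infty$. Combining $x_i\to w$ with the assumption $v\neq w$, I would then check that for all sufficiently large $m$ the $n$ points $x_{m-1},x_m,\dots,x_{m+n-3},v$ are pairwise distinct: the $n-1$ consecutive terms $x_{m-1},\dots,x_{m+n-3}$ are distinct among themselves, and each of them eventually lies so close to $w$ that it must differ from $v$.

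Next I would apply inequality~\eqref{eq2.1} to these $n$ pairwise distinct points. Using $Tx_j=x_{j+1}$ together with $Tv=v$ (hence $d(v,Tv)=0$), the left-hand side becomes $S(x_m,x_{m+1},\dots,x_{m+n-2},v)$ and the right-hand side becomes $\lambda\sum_{j=m-1}^{m+n-3} d(x_j,x_{j+1})$. Since the total pairwise-distance sum dominates any single pairwise distance, one obtains $d(x_m,v)\leqslant S(x_m,\dots,x_{m+n-2},v)\leqslant \lambda\sum_{j=m-1}^{m+n-3} d(x_j,x_{j+1})$ for all large $m$. Letting $m\to\infty$, the right-hand side tends to $0$ because $(x_i)$ is Cauchy, while $d(x_m,v)\to d(w,v)$ by continuity of the metric; hence $d(w,v)\leqslant 0$, forcing $v=w$ and contradicting $v\neq w$.

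The only genuinely delicate point I anticipate is the uniform verification of pairwise distinctness for large $m$ (ensuring $v$ differs from each $x_j$ in the sliding window and that the window itself avoids repetition); this is where the no-periodic-point hypothesis and the separation $d(v,w)>0$ are both needed. Once that bookkeeping is in place, the remainder is the same routine limiting argument that concludes the proof of Theorem~\ref{thm1}.
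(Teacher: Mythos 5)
Your proposal is correct and follows essentially the same route as the paper: argue by contradiction, feed a sliding window of iterates together with the putative second fixed point into inequality~\eqref{eq2.1}, observe that the right-hand side vanishes as $m\to\infty$, and extract the distance between the two fixed points from the left-hand side. The only (immaterial) difference is the choice of $n$-tuple: the paper uses both fixed points $w,z$ plus $n-2$ consecutive iterates and obtains $(n-1)d(w,z)\leqslant 0$ in the limit, whereas you use $n-1$ consecutive iterates plus $v$ alone and bound $d(x_m,v)$ by $S$; the paper also gets distinctness of $z$ from every $x_m$ outright (if $z=x_m$ the sequence is eventually constant at $z$, forcing $w=z$), while your ``for large $m$'' version suffices equally well.
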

\begin{proof} Suppose that $z$ is another fixed point of $T$. Then $z\neq x_m$ for all $m\in\mathbb{N}\cup\{0\}$, otherwise, we have $w=z$. Therefore, $w,z,x_m$ are all distinct for all $m\in\mathbb{N}\cup\{0\}$.
Thus, by~(\ref{eq2.1}) for all $m\in\mathbb{N}\cup\{0\}$, we have
\begin{align*}
&S(Tw,Tz,Tx_m,...,Tx_{m+n-3})\\
&\leqslant\lambda(d(w,Tw)+d(z,Tz)+d(x_m,Tx_m)+\cdots+d(x_{m+n-3},Tx_{m+n-3})),
\end{align*}
which implies that
\begin{align*}
S(w,z,x_{m+1},...,x_{m+n-2})
\leqslant\lambda(d(x_m,x_{m+1})+\cdots+d(x_{m+n-3},x_{m+n-2})).
\end{align*}
Letting $m\rightarrow \infty$ in the above inequality, we have $(n-1)d(w,z)\leqslant 0$, which contradicts to the fact that $w\neq z$. Therefore, $T$ has a unique fixed point.
\end{proof}

It is not hard to see that Theorem \ref{thm1} admits the following reformulation.
\begin{thm}\label{thm2}
Let $(X,d)$ be a complete metric space with $|X| \geq n \geq 2$. Let $T\colon  X \rightarrow X$ be an $n$-point Kannan-type mapping. Then $T$ has a periodic point of prime period $p$, $p\in \{1,2, 3, \dots, n-1\}$.
\end{thm}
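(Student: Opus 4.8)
The plan is to argue by a simple exhaustive dichotomy on the possible prime periods, using the observation that a fixed point is exactly a periodic point of prime period $1$. This is what makes Theorem~\ref{thm2} a genuine reformulation rather than a new result: the substantive work has already been carried out in Theorem~\ref{thm1}, and all that remains is to repackage its conditional conclusion as an unconditional one.

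First I would split into two cases. In the first case, assume that $T$ has a periodic point of prime period $p$ for some $p \in \{2, 3, \ldots, n-1\}$. Then the desired conclusion holds immediately with this value of $p$, and there is nothing further to prove. In the second case, assume that $T$ has no periodic point of prime period $2, 3, \ldots, n-1$. This is precisely the hypothesis of Theorem~\ref{thm1}, so that theorem applies and yields a fixed point $w \in X$, i.e., $Tw = w$. Since a point satisfying $Tw = w$ is a periodic point of prime period $1$, the conclusion holds with $p = 1$. As the two cases are mutually exclusive and jointly exhaustive, in either situation $T$ has a periodic point of prime period $p \in \{1, 2, 3, \ldots, n-1\}$, which is exactly the assertion of the theorem.

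I would also record the degenerate boundary behaviour as a sanity check: when $n = 2$ the index set $\{2, \ldots, n-1\}$ is empty, so the first case cannot occur, the hypothesis of Theorem~\ref{thm1} is vacuously satisfied, and the statement collapses to the pure existence of a fixed point, in agreement with the classical Kannan setting. The main obstacle here is essentially nonexistent: the only points requiring care are the purely logical bookkeeping that the two cases exhaust all possibilities and the (standard) identification of a fixed point with a prime-period-$1$ point; no metric estimates or convergence arguments are needed beyond those already established.
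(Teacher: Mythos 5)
Your proposal is correct and matches the paper's intent exactly: the paper states that Theorem~\ref{thm2} is simply a reformulation of Theorem~\ref{thm1}, and your case split (either a periodic point of prime period $2,\dots,n-1$ already exists, or Theorem~\ref{thm1} applies to produce a fixed point, i.e.\ a point of prime period $1$) is precisely that reformulation. Nothing further is needed.
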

Recall that the orbit of the point $x$ under the mapping $T$ is the set $O_T(x)=\{x, Tx, T^2x,\ldots\}$.
\begin{rem}
Let $n\geqslant 3$. Suppose that in Theorem~\ref{thm2}, there exist two periodic points $x_1$ and $x_2$ with prime periods $p_1$ and $p_2$, respectively, $p_1,p_2\neq 2$, such that their orbits are distinct, and $p_1+p_2=n$. Then,
$$
O_T(x_1)=\{x_1, Tx_1,\ldots, T^{p_1-1}x_1\} \text{ and } O_T(x_2)=\{x_2, Tx_2,\ldots, T^{p_2-1}x_2\}.
$$
Substituting the pairwise distinct points of the orbits $O_T(x_1)$ and $O_T(x_2)$ into~(\ref{eq2.1}), we get

$$
S(Tx_1,T^2x_1,\ldots,T^{p_1-1}x_1,x_1,
Tx_2,T^2x_2,\ldots,T^{p_1-1}x_2,x_2)
$$
$$
\leqslant
\lambda
(d(x_1,Tx_1)+d(Tx_1,T^2x_1)+\cdots+d(T^{p_1-1}x_1,x_1))
$$
$$
+(d(x_2,Tx_2)+d(Tx_2,T^2x_2)+\cdots+d(T^{p_2-1}x_2,x_2)),
$$

which is a contradiction, since all the distances on the right side of this inequality are already included on the left side, and $\lambda <1$.

Analogously, the case when  $x_1,x_2,\ldots,x_k$, $k=1,2,\ldots$, are periodic points of the mapping $T$, each having a distinct orbit and corresponding prime periods $p_1,p_2,\ldots,p_k$, respectively with $p_i\neq 2$ for all $i\in\{1,...,k\}$, and $p_1+p_2+\cdots+p_k=n$, is also impossible.

In particular, this means that $T$ can admit at most $n-1$ fixed points and  cannot have a periodic point of prime period $n$. The case when $p_i=2$ for some $i\in\{1,...,k\}$ is also impossible. The proof is connected with simple analysis of triangle inequalities and is left to the reader.


\end{rem}
Numerous examples of $3$-point Kannan-type mappings are presented in~\cite{PB23}. To illustrate Theorem \ref{thm1} we now provide an example of $n$-point Kannan-type mapping, which is not an $(n-1)$-point Kannan-type mapping.

\begin{ex}
Let $(X,d)$ be a metric space with $X = \{x_1, \ldots, x_n\}$, $n \geq 3$, where the metric is defined by:
\[
d(x_i, x_j) =
\begin{cases}
0 & \text{if } i = j, \\
1 & \text{if } 1 \leqslant i,j \leqslant n-1 \text{ and } i \neq j, \\
M & \text{if } \max\{i,j\} = n \text{ and } i \neq j,
\end{cases}
\]
where $M > 1$ is a constant. The reader can easily verify that $(X,d)$ is a metric space and even ultrametric space. Define the mapping $T\colon X \to X$ as
\[
Tx_i =
\begin{cases}
x_{i+1} & \text{for } 1 \leqslant i \leqslant n-2, \\
x_{n-1} & \text{for } i = n-1, \\
x_1 & \text{for } i = n.
\end{cases}
\]
Here,
$$
S(Tx_1, \ldots, Tx_n)=
S(x_2,x_3, \ldots, x_{n-1}, x_{n-1}, x_1)=\binom{n}{2}-1=\frac{n(n-1)}{2}-1.
$$
and
\begin{align*}
\sum_{i=1}^n d(x_i, Tx_i) &= \sum_{i=1}^{n-2} d(x_i, x_{i+1}) + d(x_n, x_1)= n-2 + M.
\end{align*}
The ratio is
\begin{align*}
R = \frac{S(Tx_1, \ldots, Tx_n)}{\sum_{i=1}^n d(x_i, Tx_i)}
= \frac{\frac{n(n-1)}{2} -1}{n-2 + M}
\leqslant \frac{n-1}{n}.
\end{align*}
It is clear that this  inequality holds for sufficiently large $M$.

The point $x_{n-1}$ is the unique fixed point since $Tx_{n-1} = x_{n-1}$. Thus, $T$ is an $n$-point Kannan-type mapping with $\lambda \in [R, \frac{n-1}{n})$.

Let us show that for $n\geqslant 4$, the mapping $T$ is not an $(n-1)$-point Kannan-type mapping. Indeed,
$$
S(Tx_1, \ldots, Tx_{n-1})=
S(x_2,x_3, \ldots, x_{n-1}, x_{n-1})=\binom{n-1}{2}-1=\frac{(n-1)(n-2)}{2}-1.
$$
\begin{align*}
\sum_{i=1}^{n-1} d(x_i, Tx_i) = \sum_{i=1}^{n-2} d(x_i, x_{i+1})= n-2.
\end{align*}
It is easy to verify that the inequality
$$
\frac{(n-1)(n-2)}{2}-1 \geqslant n-2,
$$
holds for every $n\geqslant 4$, which makes inequality~(\ref{eq2.1}) impossible for $n-1$ points. This completes the proof.
\end{ex}

\begin{rem}
If $x$ is an isolated point of a metric space, then clearly any mapping is continuous at $x$. Suppose in the definition of sequential continuity in a metric space at an accumulation point $x$, we consider sequences $(x_m)$ converging to $x$, where all terms $(x_m)$ are distinct from each other (i.e., $x_i\neq x_j$ for $i\neq j$) and $x_m\neq x^*$ for all $m$. It is not hard to see that this restricted version is still equivalent to the usual definition of continuity.
\end{rem}

It is well known that standard Kannan mappings are, in general, discontinuous, but necessarily continuous at their fixed points. Examples of discontinuous $3-$point Kannan-type mappings can be found in~\cite{PB23}.

\begin{prop}
$n$-point Kannan type mappings are continuous at fixed points.
\end{prop}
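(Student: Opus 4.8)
The plan is to establish sequential continuity at a fixed point $w$ (so $Tw=w$). If $w$ is an isolated point there is nothing to prove, so assume $w$ is an accumulation point. By the preceding remark it suffices to verify continuity along sequences $(y_m)$ of pairwise distinct points, all different from $w$, with $y_m\to w$; I would show that such a sequence satisfies $Ty_m\to w$.

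The key step is to feed into the defining inequality~\eqref{eq2.1} the $n$ pairwise distinct preimages $w,y_m,y_{m+1},\ldots,y_{m+n-2}$. Their distinctness is exactly what the remark secures, and it is all that~\eqref{eq2.1} requires (the images need not be distinct). Since $d(w,Tw)=0$, the right-hand side loses its first summand, giving
\begin{equation*}
S(w,Ty_m,\ldots,Ty_{m+n-2}) \leqslant \lambda\sum_{i=0}^{n-2} d(y_{m+i},Ty_{m+i}).
\end{equation*}
On the left I would keep only the $n-1$ distances emanating from $w$, using $S(w,Ty_m,\ldots,Ty_{m+n-2})\geqslant \sum_{i=0}^{n-2} d(w,Ty_{m+i})$, and on the right I would apply the triangle inequality $d(y_{m+i},Ty_{m+i})\leqslant d(y_{m+i},w)+d(w,Ty_{m+i})$.

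Writing $A_m=\sum_{i=0}^{n-2} d(w,Ty_{m+i})$ and $B_m=\sum_{i=0}^{n-2} d(y_{m+i},w)$, the two bounds combine into the self-referential estimate $A_m\leqslant \lambda(B_m+A_m)$, whence $(1-\lambda)A_m\leqslant \lambda B_m$ and, since $\lambda\in[0,\tfrac{n-1}{n})\subset[0,1)$, one gets $A_m\leqslant \tfrac{\lambda}{1-\lambda}B_m$. As $y_m\to w$ we have $B_m\to 0$, hence $A_m\to 0$; and since $d(w,Ty_m)\leqslant A_m$, this forces $Ty_m\to w=Tw$, establishing continuity at $w$.

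The main obstacle is the apparent circularity: the displacement terms $d(y_{m+i},Ty_{m+i})$ on the right already involve the very quantities $d(w,Ty_{m+i})$ that measure the failure of continuity, so they cannot be controlled directly. The device that resolves this is to sum the contributions of all $n-1$ auxiliary points \emph{simultaneously} rather than isolating $d(w,Ty_m)$ alone; this lets the unknown $A_m$ appear on both sides and be absorbed using $\lambda<1$. Everything else is routine, and the case $n=2$ degenerates into the classical Kannan computation, confirming that the argument is uniform in $n$.
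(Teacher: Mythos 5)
Your argument is correct, and it begins exactly where the paper's does: substituting the $n$ pairwise distinct points $w, y_m, y_{m+1},\ldots, y_{m+n-2}$ into~\eqref{eq2.1} and killing the term $d(w,Tw)$. The divergence is in how the displacement terms $d(y_{m+i},Ty_{m+i})$ on the right are controlled. The paper passes directly from $\lambda\sum_{i} d(y_{m+i},Ty_{m+i})$ to $\lambda\sum_{i} d(y_{m+i},y_{m+i+1})$, i.e.\ it silently replaces $Ty_{m+i}$ by $y_{m+i+1}$; that substitution is legitimate only when the test sequence is an orbit of $T$, not for an arbitrary sequence converging to $w$, which is what sequential continuity requires, so the paper's printed estimate has a gap at precisely this point. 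Your absorption device --- bounding $S(w,Ty_m,\ldots,Ty_{m+n-2})$ from below by $A_m=\sum_{i=0}^{n-2} d(w,Ty_{m+i})$, splitting $d(y_{m+i},Ty_{m+i})\leqslant d(y_{m+i},w)+d(w,Ty_{m+i})$, and moving $A_m$ to the left using $\lambda<1$ (which holds since $\lambda<\tfrac{n-1}{n}$) --- closes exactly that gap and is the faithful $n$-point analogue of the classical Kannan continuity computation. So your route is not merely a different presentation: it yields the estimate $A_m\leqslant \tfrac{\lambda}{1-\lambda}B_m$ valid for every admissible sequence, at the modest cost of the extra constant $\lambda/(1-\lambda)$, whereas the paper's shortcut would need the sequence to be an iteration sequence to go through.
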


\begin{proof}
Let $(X,d)$ be a metric space with $|X|\geqslant 2$, $T\colon X\to X$ be an $n$-point Kannan-type mapping and $x^*$ be a fixed point of $T$. If $x^*$ is isolated, then the assertion is evident.  Let $(x_m)$ be a sequence such that $x_m\to x^*$, $x_{i}\neq x_{j}$ for all $i\neq j$ and $x_{m}\neq x^*$ for all $m$. Let us show that $Tx_m\to Tx^*$. Since all the $n$ points $x^*,x_m, x_{m+1},\ldots,x_{m+n-2}$ are pairwise distinct, by~(\ref{eq2.1}) we have
  \begin{multline*}
   S(Tx^*,Tx_m, Tx_{m+1},\ldots,Tx_{m+n-2}) \\ \leqslant \lambda (d(x^*,Tx^*)+d(x_m,Tx_m)+d(x_{m+1},Tx_{m+1})+\cdots+d(x_{m+n-2},Tx_{m+n-2})).
  \end{multline*}
Hence,
  \begin{equation*}
   d(Tx^*,Tx_m) \leqslant \lambda (d(x_m,x_{m+1})+\cdots+d(x_{m+n-2},x_{m+n-1})) \to 0
  \end{equation*}
  as $m\to \infty$
since, clearly, $d(x_m,x^*)\to 0$ implies $d(x_{m},x_{m+1})\to 0$. This completes the proof.
\end{proof}

We now present an example illustrating that an $n-$point Kannan-type mapping can be discontinuous at every point of the domain except at its fixed point, which is the only point of continuity.

\begin{ex}
Let \( X = [0,2] \), \( d \) be the usual Euclidean metric on $X$ and let \( n \geq 2 \) be a fixed integer. Define a mapping \( T \colon X \to X \) by
\[
T(x) =
\begin{cases}
\dfrac{x}{n+2}, & \text{if } x \text{ is rational}, \\
0, & \text{if } x \text{ is irrational}.
\end{cases}
\]
Then \( T \) satisfies the Kannan condition
\[
d(Tx, Ty) \leq \dfrac{1}{n+1} \left( d(x, Tx) + d(y, Ty) \right)
\]
for all \( x, y \in X \). Therefore, by Proposition~\ref{pro:3.1}, \( T \) qualifies as an $n-$point Kannan-type mapping for a fixed integer $n\geq 2$. It is important to note that a $2-$point Kannan-type mapping is essentially the classical Kannan condition. Also, observe that \( T \) is discontinuous at every point of \( X \) except at \( x = 0 \), which is the unique fixed point of \( T \).
\end{ex}

\section{$n$-point $G$-Kannan-type mapping, asymptotic regularity and multiple fixed points}

Asymptotic regularity constitutes a fundamental condition that enhances the applicability of numerous contractive mappings, thereby enabling fixed point theorems to accommodate a broader class of mappings.

Let $(X, d)$ be a metric space. A mapping $T\colon X \to X$ is called asymptotically regular \cite{BP66} if it satisfies
\begin{equation}\label{ar}
\lim_{n \to \infty} d(T^{n}x, T^{n+1}x) = 0
\end{equation}
for all $x \in X$.

\begin{rem}\label{r32}
Let \((X, d)\) be a metric space, and let \(T \colon X \to X\) be a self-mapping. Suppose we generate a sequence \((x_n)\) starting from \(x_0 \in X\) via the iteration \(x_{n}=T^nx_0\) for all \(n \geq 1\). Assume that \(T\) is asymptotically regular at each point of \(X\), and further, that \((x_n)\) does not converge to a fixed point of \(T\). Then the sequence \((x_n)\) contains no repetitions, that is, \(x_i \ne x_j\) for all \(i \ne j\). To see this, note that if two terms of the sequence were equal, then the iterates would repeat in a cycle, making the sequence eventually periodic. However, such periodicity would contradict the asymptotic regularity of \(T\), which requires that the distances \(d(x_n, x_{n+1})\) vanish in the limit.
\end{rem}

We now introduce a more general class of mappings, termed \emph{$n$-point $G$-Kannan-type mappings}, which encompass the class of \emph{$n$-point Kannan-type mappings} as a special case.

First, we define the class \(\mathcal{G}\) of functions $G\colon \mathbb{R}_+^n \to \mathbb{R}_+$ satisfying the following properties

\begin{enumerate}
    \item[(i)] \(G(0, 0, \ldots, 0) = 0\);
    \item[(ii)] \(G\) is continuous at the point \((0, 0, \ldots, 0)\).
\end{enumerate}

\begin{defn}\label{def:4.1}
Let \((X, d)\) be a metric space with \(|X| \geqslant 2\). A mapping \(T\colon X \to X\) is said to be an \emph{$n$-point $G$-Kannan-type mapping} in \(X\) (for \(2 \leqslant n \leqslant |X|\), \(n \in \mathbb{N}\)) if there exists a function \(G \in \mathcal{G}\) such that the following inequality holds
\begin{equation}\label{eq4.1}
S(Tx_1, Tx_2, \ldots, Tx_n) \leqslant G\bigl(d(x_1, Tx_1), d(x_2, Tx_2), \ldots, d(x_n, Tx_n)\bigr)
\end{equation}
for all collections of pairwise distinct points \(x_i \in X\), \(i = 1, 2, \ldots, n\).
\end{defn}

\begin{thm}\label{thm7}
Let $(X,d)$ be a complete metric space with $|X| \geq n \geq 2$. Let $T\colon  X \rightarrow X$ be a continuous, asymptotically regular $n$-point $G$-Kannan-type mapping.  Then $T$ has a fixed point in $X$. Moreover, $T$ can admit at most $n-1$ fixed points.
\end{thm}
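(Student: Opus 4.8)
The plan is to argue by contradiction: assume that $T$ has no fixed point and derive one. Fix an arbitrary $x_0 \in X$ and form the Picard sequence $x_m = T^m x_0$, writing $p_m = d(x_m, x_{m+1})$. Asymptotic regularity gives $p_m \to 0$ directly, which is the substitute for the consecutive-distance decay that Lemma~\ref{l26} extracted from the linear structure in Theorem~\ref{thm1}. Since we are assuming $T$ has no fixed point, the sequence $(x_m)$ certainly does not converge to a fixed point, so Remark~\ref{r32} applies and guarantees that all terms $x_m$ are pairwise distinct. In particular, any collection of terms with distinct indices is admissible in the defining inequality~\eqref{eq4.1}.

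The heart of the argument is showing that $(x_m)$ is Cauchy, and here the general function $G$ prevents us from reusing the recursion of Theorem~\ref{thm1}. The key idea is to place the two indices whose distance we wish to control into a single application of~\eqref{eq4.1}, filling the remaining slots with terms that are also far out in the sequence. Concretely, for $p < q$ (both large) I would apply~\eqref{eq4.1} to the $n$ pairwise distinct preimage points $x_{p-1}, x_{q-1}, x_q, \ldots, x_{q+n-3}$. Their images are $x_p, x_q, x_{q+1}, \ldots, x_{q+n-2}$, so the left-hand side $S$ contains $d(x_p, x_q)$ as one of its summands, whence
\[
d(x_p, x_q) \leqslant S(x_p, x_q, x_{q+1}, \ldots, x_{q+n-2}) \leqslant G\bigl(p_{p-1}, p_{q-1}, p_q, \ldots, p_{q+n-3}\bigr).
\]
Every argument of $G$ on the right is a consecutive-step distance with index at least $\min\{p-1, q-1\}$, so all of them tend to $0$ as $p, q \to \infty$. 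Using $G(0,\ldots,0)=0$ together with the continuity of $G$ at the origin, the right-hand side can be made smaller than any prescribed $\varepsilon$ once $p$ and $q$ are large enough; this is exactly the Cauchy property. By completeness, $(x_m)$ converges to some $w \in X$.

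It then remains to identify $w$ as a fixed point, and this is precisely where continuity of $T$ is indispensable. Unlike in Theorem~\ref{thm1}, for a general $G$ we cannot isolate $d(w,Tw)$ algebraically inside an inequality of the form~\eqref{eq4.1}; instead I would simply pass to the limit in $x_{m+1} = Tx_m$, using $x_m \to w$ and continuity to obtain $Tw = \lim Tx_m = \lim x_{m+1} = w$. This contradicts the assumption that $T$ has no fixed point and establishes existence. Finally, the bound of at most $n-1$ fixed points follows verbatim as in Theorem~\ref{thm1}: if $w_1, \ldots, w_n$ were $n$ distinct fixed points, then~\eqref{eq4.1} applied to them would yield $S(w_1,\ldots,w_n) \leqslant G(0,\ldots,0) = 0$, contradicting their distinctness.

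I expect the main obstacle to be the Cauchy step. The naive approach of chaining the window estimates $S(x_{m+1},\ldots,x_{m+n}) \to 0$ through the triangle inequality fails, because $\sum_m p_m$ need not converge for a general $G$, so no geometric decay of the tail is available. The plug-in described above is what circumvents this, trading the geometric decay (available only in the linear case of Theorem~\ref{thm1}) for the continuity of $G$ at the origin, while the continuity of $T$ is reserved for the final limit passage.
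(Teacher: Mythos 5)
Your proposal is correct and follows essentially the same route as the paper: Picard iteration, asymptotic regularity plus Remark~\ref{r32} for distinctness, a single application of~\eqref{eq4.1} that places both target indices among the images so that $d(x_p,x_q)$ is dominated by $G$ evaluated at consecutive-step distances tending to $0$, continuity of $T$ for the limit passage, and the standard argument for at most $n-1$ fixed points. The only (immaterial) difference is that the paper first splits off $d(x_m,x_{m+1})$ by the triangle inequality before invoking the $S$-bound, whereas you bound $d(x_p,x_q)$ directly by the relevant summand of $S$.
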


\begin{proof}
Let $x_0 \in X$, and define $T(x_0) = x_1$, $T(x_1) = x_2$, and so on. Suppose that the sequence $(x_m)$ does not possess a fixed point of $T$. Let us prove that $(x_m)$ is a Cauchy sequence. It is sufficient to show that $d(x_m,x_{m+p})\to 0$ as $m\to \infty$ for all $p>0$.
If $p=1$, then this follows from the definition of asymptotic regularity. Let $p\geqslant 2$. By Remark~\ref{r32} the points $x_{m}$, $x_{m+p-1}$, $x_{m+p}$,\ldots,$x_{m+p+n-3}$ are pairwise distinct. Using the triangle inequality, inequality~(\ref{eq4.1}), asymptotic regularity, and properties (i) and (ii), we obtain
\begin{align*}
&d(x_m,x_{m+p})
\leqslant d(x_m,x_{m+1})+d(x_{m+1},x_{m+p})\\
&\leqslant d(x_m,x_{m+1})+
S(x_{m+1},x_{m+p},x_{m+p+1},\ldots,x_{m+p+n-2})\\
&= d(x_m,x_{m+1})+
S(Tx_{m},Tx_{m+p-1},Tx_{m+p},\ldots,Tx_{m+p+n-3})\\
&\leqslant d(x_m,x_{m+1})+
G(d(x_{m},Tx_{m}),d(x_{m+p-1},Tx_{m+p-1}),d(x_{m+p},Tx_{m+p}),\ldots \\
&\ldots,d(x_{m+p+n-3},Tx_{m+p+n-3}))\\
&= d(x_m,x_{m+1})+
G(d(x_{m},x_{m+1}),d(x_{m+p-1},x_{m+p}),d(x_{m+p},x_{m+p+1}),\ldots \\
&\ldots,d(x_{m+p+n-3},x_{m+p+n-2})) \to 0 \text{ as } m\to \infty.
\end{align*}
Thus, $(x_{m})$ is a Cauchy sequence.
Since \(X\) is complete, there exists \(x^* \in X\) such that \(x_m \to x^*\).
Continuity of \(T\) implies
\[
Tx^* = T\left(\lim_{m \to \infty} x_m\right) = \lim_{m \to \infty} T x_m = \lim_{m \to \infty} x_{m+1} = x^*.
\]
The rest of the proof follows from reasoning similar to the last paragraph of the proof of Theorem~\ref{thm1}.
\end{proof}

\begin{cor}\label{cor12}
Let \((X,d)\) be a complete metric space with \(|X| \geq n \geq 2\). Let \(T\colon X \rightarrow X\) be a continuous and asymptotically regular mapping. Suppose \(T\) satisfies the \(n\)-point \(\mathcal{B}\)-Kannan-type mapping: for all collections of pairwise distinct points \(x_1, \ldots, x_n \in X\),
\begin{equation}\label{eq4.2}
S(Tx_1, \ldots, Tx_n) \leq \sum_{i=1}^{n} \beta_i(d(x_i, Tx_i))d(x_i, Tx_i),
\end{equation}
where each \(\beta_i \in \mathcal{B}\), and \(\mathcal{B}\) denotes the class of functions \(\beta\colon [0, \infty) \to [0, \infty)\) satisfying
\[
\limsup_{t \to 0^+} \beta(t) < \infty.
\]
Then $T$ has a fixed point in $X$. Moreover, $T$ can admit at most $n-1$ fixed points.
\end{cor}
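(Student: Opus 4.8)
The plan is to recognize the $\mathcal{B}$-Kannan-type condition \eqref{eq4.2} as a special instance of the $G$-Kannan-type condition \eqref{eq4.1} and then to invoke Theorem~\ref{thm7} directly. To this end I would introduce the candidate function $G\colon \mathbb{R}_+^n \to \mathbb{R}_+$ defined by
\[
G(t_1, t_2, \ldots, t_n) = \sum_{i=1}^n \beta_i(t_i)\, t_i,
\]
so that, upon setting $t_i = d(x_i, Tx_i)$, inequality \eqref{eq4.2} becomes precisely $S(Tx_1, \ldots, Tx_n) \leqslant G\bigl(d(x_1,Tx_1), \ldots, d(x_n,Tx_n)\bigr)$, which is exactly \eqref{eq4.1}. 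It then remains only to verify that $G$ belongs to the class $\mathcal{G}$, after which the hypotheses of Theorem~\ref{thm7} (completeness, continuity, asymptotic regularity, and the $G$-Kannan-type inequality) are all in place.

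Property (i) is immediate: since each $\beta_i$ takes values in $[0,\infty)$, the quantity $\beta_i(0)$ is a finite nonnegative real number, whence $\beta_i(0)\cdot 0 = 0$ for every $i$ and therefore $G(0,\ldots,0) = 0$.

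The main technical point is property (ii), the continuity of $G$ at the origin, and this is where the $\limsup$ hypothesis does the essential work. Because $\limsup_{t \to 0^+}\beta_i(t) < \infty$ for each $i$, I can fix constants $M_i > 0$ and $\delta_i > 0$ such that $\beta_i(t) \leqslant M_i$ whenever $0 < t < \delta_i$; combined with the trivial value at $t = 0$, this gives $\beta_i(t_i)\, t_i \leqslant M_i t_i$ for all $0 \leqslant t_i < \delta_i$. Summing over $i$ then yields
\[
0 \leqslant G(t_1, \ldots, t_n) \leqslant \sum_{i=1}^n M_i\, t_i \longrightarrow 0
\qquad\text{as } (t_1,\ldots,t_n) \to (0,\ldots,0),
\]
so $G$ is continuous at $(0,\ldots,0)$ and hence $G \in \mathcal{G}$.

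With $G \in \mathcal{G}$ established, $T$ is a continuous, asymptotically regular $n$-point $G$-Kannan-type mapping, and Theorem~\ref{thm7} applies verbatim to deliver both the existence of a fixed point and the bound of at most $n-1$ fixed points. I expect the only step requiring genuine care to be the passage in property (ii) from the pointwise finiteness of each $\limsup_{t\to 0^+}\beta_i(t)$ to the uniform local bounds $M_i$ near the origin; once the $\beta_i$ are known to be locally bounded at $0$, the vanishing of $G$ there is routine, and no further analysis beyond citing Theorem~\ref{thm7} is needed.
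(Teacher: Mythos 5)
Your proposal is correct and follows essentially the same route as the paper: define $G(t_1,\ldots,t_n)=\sum_{i=1}^n\beta_i(t_i)\,t_i$, verify $G\in\mathcal{G}$ using the $\limsup$ condition to control $\beta_i$ near $0$, and invoke Theorem~\ref{thm7}. Your treatment of property (ii) is in fact slightly more careful than the paper's, which asserts the limit without spelling out the local bounds $M_i$.
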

\begin{proof}
Define
\[
G(x_1, x_2, \ldots, x_n) = \sum_{i=1}^{n} \beta_i(x_i)\, x_i.
\]
Clearly, \(G(0, 0, \ldots, 0) = 0\). Moreover, since each \(\beta_i \in \mathcal{B}\), we have \(\limsup\limits_{t \to 0^+} \beta_i(t) < \infty\) for \(i = 1, 2, \ldots, n\), which implies
\[
\lim_{x_1, x_2, \ldots, x_n \to 0} G(x_1, x_2, \ldots, x_n) = 0.
\]
Hence, \(G \in \mathcal{G}\), and the result follows from Theorem~\ref{thm7}.
\end{proof}

By setting \(\beta_1(t) = \beta_2(t) = \cdots = \beta_n(t) = \lambda\) with \(\lambda \geq 0\) in (\ref{eq4.2}),  we obtain an $n$-point Kannan-type mapping with coefficient \(\lambda \in [0,\infty)\).

Hence, we immediately obtain the following result.

\begin{cor}\label{cor16}
Let \((X,d)\) be a complete metric space with \(|X| \geq n \geq 2\). Let \(T\colon X \to X\) be a continuous and asymptotically regular generalized $n$-point Kannan-type mapping with the coefficient \(\lambda \in [0,\infty)\). Then $T$ has a fixed point in $X$. Moreover, $T$ can admit at most $n-1$ fixed points.
\end{cor}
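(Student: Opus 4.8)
The plan is to recognize that the hypothesis is nothing more than the special case of the $n$-point $\mathcal{B}$-Kannan-type inequality~(\ref{eq4.2}) in which every weight function is constant, and then to invoke Corollary~\ref{cor12} directly. First I would write out the defining condition explicitly: by the specialization noted just before the statement, a generalized $n$-point Kannan-type mapping with coefficient $\lambda \in [0,\infty)$ is one for which
\[
S(Tx_1,\ldots,Tx_n) \leqslant \lambda \sum_{i=1}^n d(x_i,Tx_i)
\]
holds for all pairwise distinct points $x_1,\ldots,x_n \in X$. Taking $\beta_i(t) = \lambda$ for each $i = 1,\ldots,n$, the right-hand side becomes exactly $\sum_{i=1}^n \beta_i\bigl(d(x_i,Tx_i)\bigr)\,d(x_i,Tx_i)$, so the mapping satisfies~(\ref{eq4.2}) verbatim.

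Next I would verify the one thing that needs checking, namely membership in the class $\mathcal{B}$. For the constant function $\beta(t) \equiv \lambda$ with $\lambda \geq 0$ one has $\limsup_{t \to 0^+} \beta(t) = \lambda < \infty$, so $\beta \in \mathcal{B}$. Since completeness of $X$, the cardinality bound $|X| \geq n \geq 2$, continuity of $T$, and asymptotic regularity of $T$ are all furnished by the hypotheses, every assumption of Corollary~\ref{cor12} is met. The conclusion — existence of a fixed point together with the bound of at most $n-1$ fixed points — then transfers without alteration.

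I expect no genuine obstacle here; the content is an immediate corollary. The only point worth flagging is that the coefficient $\lambda$ is now unrestricted and may exceed $\frac{n-1}{n}$, in contrast with Definition~\ref{def:3.1} and Theorem~\ref{thm1}. This is harmless precisely because the proof of Theorem~\ref{thm7}, on which Corollary~\ref{cor12} rests, never exploits any smallness of the contraction data: it uses only the two structural properties defining $\mathcal{G}$ — that $G$ vanishes at the origin and is continuous there — in conjunction with asymptotic regularity to drive $d(x_m,x_{m+p}) \to 0$. The associated function $G(t_1,\ldots,t_n) = \lambda \sum_{i=1}^n t_i$ plainly satisfies both properties for any fixed $\lambda \geq 0$, so the enlargement of the admissible range of $\lambda$ from $[0,\frac{n-1}{n})$ to $[0,\infty)$ is exactly the payoff of the added continuity and asymptotic-regularity hypotheses rather than a source of difficulty.
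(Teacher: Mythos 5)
Your proposal is correct and matches the paper's own argument: the paper likewise obtains this corollary by setting $\beta_1(t)=\cdots=\beta_n(t)=\lambda$ in~(\ref{eq4.2}) and invoking Corollary~\ref{cor12}. Your additional check that the constant function lies in $\mathcal{B}$ and your remark on why $\lambda$ may exceed $\frac{n-1}{n}$ are consistent with, and slightly more explicit than, the paper's one-line justification.
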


In the following theorem, we demonstrate that omitting the continuity assumption allows us to restrict the constant $\lambda$ to the interval $[0,1)$ instead of $[0, \infty)$.

\begin{thm}\label{th16}
Let \((X,d)\) be a complete metric space with \(|X| \geq n \geq 2\). Let \(T\colon X \to X\) be an asymptotically regular $n$-point Kannan-type mapping with coefficient \(\lambda \in [0, 1)\).
Then $T$ has a fixed point in $X$. Moreover, $T$ can admit at most $n-1$ fixed points.
\end{thm}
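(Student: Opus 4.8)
The plan is to keep the two-stage architecture of the proofs of Theorems~\ref{thm1} and~\ref{thm7}, but to assign the two hypotheses distinct roles: asymptotic regularity will power the Cauchy argument, while the bound $\lambda<1$ will be spent only at the very end, when the limit is shown to be a fixed point. Fix $x_0\in X$ and set $x_m=T^mx_0$. If some $x_m$ is a fixed point we are done, so assume otherwise; then, arguing as in Remark~\ref{r32}, asymptotic regularity forces the iterates to be pairwise distinct (a repetition would make the orbit eventually periodic with a non-vanishing consecutive gap, contradicting $d(x_m,x_{m+1})\to 0$). This distinctness is exactly what licenses us to feed $n$-tuples of iterates into~\eqref{eq2.1}.

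First I would show that $(x_m)$ is Cauchy, by repeating the computation from the proof of Theorem~\ref{thm7} with $G(t_1,\dots,t_n)=\lambda(t_1+\cdots+t_n)$. For $p\geqslant 2$ one estimates
\[
d(x_m,x_{m+p})\leqslant d(x_m,x_{m+1})+d(x_{m+1},x_{m+p}),
\]
bounds $d(x_{m+1},x_{m+p})\leqslant S(Tx_m,Tx_{m+p-1},Tx_{m+p},\dots,Tx_{m+p+n-3})$ (the right-hand tuple consisting of pairwise distinct iterates), and applies~\eqref{eq2.1} to convert this into $\lambda$ times a sum of consecutive-gap distances $d(x_k,x_{k+1})$. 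Each such gap vanishes as $m\to\infty$ by asymptotic regularity, so $d(x_m,x_{m+p})\to 0$ for every $p$. Crucially, this step uses only that $\lambda$ is finite, never that $\lambda<\tfrac{n-1}{n}$; that is precisely what permits enlarging the coefficient range to $[0,1)$. Completeness then yields a limit $w\in X$.

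The second stage identifies $w$ as a fixed point without invoking continuity, mirroring the final paragraph of the proof of Theorem~\ref{thm1}. Since the $x_m$ are pairwise distinct and converge to $w$, the point $w$ either avoids the sequence entirely or coincides with a single $x_k$ (a second coincidence would make the sequence cyclic, contradicting the Cauchy property); in either case the $n$ points $x_{m-1},x_m,\dots,x_{m+n-3},w$ are pairwise distinct for all large $m$. Feeding them into~\eqref{eq2.1} through $d(w,Tw)\leqslant d(w,x_m)+d(Tx_{m-1},Tw)\leqslant d(w,x_m)+S(Tx_{m-1},\dots,Tw)$ gives
\[
d(w,Tw)\leqslant d(w,x_m)+\lambda\Big(d(w,Tw)+\sum_{j=m-1}^{m+n-3}d(x_j,x_{j+1})\Big),
\]
and rearranging with $\lambda<1$ produces
\[
d(w,Tw)\leqslant\frac{1}{1-\lambda}\Big(d(w,x_m)+\lambda\sum_{j=m-1}^{m+n-3}d(x_j,x_{j+1})\Big)\to 0
\]
as $m\to\infty$, whence $Tw=w$. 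The bound on the number of fixed points is then verbatim Theorem~\ref{thm1}: $n$ distinct fixed points $w_1,\dots,w_n$ would force $S(w_1,\dots,w_n)=S(Tw_1,\dots,Tw_n)\leqslant\lambda\sum_i d(w_i,Tw_i)=0$, contradicting their distinctness, so $T$ has at most $n-1$ of them.

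The main obstacle is conceptual rather than computational: one must pinpoint exactly where the weaker hypothesis $\lambda<1$ (rather than $\lambda<\tfrac{n-1}{n}$) is consumed. The Cauchy estimate is insensitive to the size of $\lambda$ once asymptotic regularity is in force, so the sole inequality that can break for large $\lambda$ is the final division by $1-\lambda$; this is what pins the admissible range at $[0,1)$. The only delicate bookkeeping is the possibility $w=x_k$, which must be controlled (as in Theorem~\ref{thm1}) to guarantee that the test $n$-tuple stays pairwise distinct for all sufficiently large $m$.
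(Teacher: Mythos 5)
Your proposal is correct and follows essentially the same route as the paper, whose proof of Theorem~\ref{th16} is precisely to run the Cauchy argument of Theorem~\ref{thm7} (with $G(t_1,\dots,t_n)=\lambda(t_1+\cdots+t_n)$, so asymptotic regularity rather than the size of $\lambda$ drives that stage) and then conclude via equations~(\ref{33}) and~(\ref{34}) from Theorem~\ref{thm1}, which is where $\lambda<1$ is consumed. Your explicit accounting of where each hypothesis is used matches the paper's intent exactly.
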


\begin{proof}
The steps of the proof are analogous to those in Theorem \ref{thm7} up to the point where the sequence is shown to be Cauchy, followed by equations (\ref{33}) and (\ref{34}) as given in Theorem \ref{thm1}.
\end{proof}

\begin{rem}
Theorem \ref{thm7} remains valid if the continuity of the mapping
$T$ is relaxed to weaker continuity notions, which have been thoroughly studied in \cite{B23}.
\end{rem}


\noindent\textbf{Data availability} Data sharing is not applicable to this article as no data sets were generated or analyzed during the current study.

\noindent\textbf{Declarations}\\

\noindent\textbf{Conﬂict of interest} The authors declare that they have no conﬂicts of interest to disclose.


\end{document}